\setlist[enumerate,1]{label=\textup{(\arabic*)}}
\renewcommand*{\PrintDOI}[1]{\href{http://dx.doi.org/\detokenize{#1}}{doi: \detokenize{#1}}}
\newcommand{\comment}[1]{}  
\definecolor{orange}{rgb}{1,0,0}
\definecolor{cadmiumgreen}{rgb}{0, 0.78, 0.05}
\theoremstyle{plain}
\newtheorem{theorem}{Theorem}[section]
\newtheorem{lemma}[theorem]{Lemma}
\newtheorem{corollary}[theorem]{Corollary}
\newtheorem{proposition}[theorem]{Proposition}
\theoremstyle{remark}
\newtheorem{remark}[theorem]{Remark}
\theoremstyle{definition}
\newtheorem{definition}[theorem]{Definition}
\newtheorem{example}[theorem]{Example}
\numberwithin{theorem}{section}
\newcommand{\bgl}{\begin{equation}} 
\newcommand{\egl}{\end{equation}}
\newcommand{\btheo}{\begin{theorem}}
\newcommand{\etheo}{\end{theorem}}
\newcommand{\blemma}{\begin{lemma}}
\newcommand{\elemma}{\end{lemma}}
\newcommand{\bproof}{\begin{proof}}
\newcommand{\eproof}{\end{proof}}
\newcommand{\bbew}{\begin{beweis}}
\newcommand{\ebew}{\end{beweis}}
\newcommand{\bremark}{\begin{remark}}
\newcommand{\eremark}{\end{remark}}
\newcommand{\bex}{\begin{example}\em}
\newcommand{\eex}{\end{example}}
\newcommand{\bdefin}{\begin{definition}}
\newcommand{\edefin}{\end{definition}}
\newcommand{\bprop}{\begin{proposition}}
\newcommand{\eprop}{\end{proposition}}
\newcommand{\bcor}{\begin{corollary}}
\newcommand{\ecor}{\end{corollary}}
\newcommand{\bfa}{\begin{cases}} 
\newcommand{\efa}{\end{cases}}
\newcommand{\cO}{\mathcal O}
\newcommand\N{\mathbb N}
\newcommand\R{\mathbb R}
\newcommand\Z{\mathbb Z}
\newcommand{\bdd}{\mathcal B}
\newcommand{\coma}{\widehat}
\newcommand{\comb}{\overbracket[.7pt][1.4pt]}
\newcommand*{\comJ}[2]{\comb{\ul{#1}_{#2}}}
\newcommand*{\tub}[3]{\mathcal{T}_{#3}(#1,#2)}
\newcommand{\updagger}{\textup{\tiny\!\!\dagger}}
\newcommand{\defeq}{\mathrel{:=}} 
\newcommand{\triqui}{\vartriangleleft}
\newcommand{\rig}{\mathrm{rig}}
\newcommand{\Spec}{\operatorname{Spec}}
\newcommand\hotimes{\mathbin{\comb{\otimes}}}
\newcommand{\Hy}{\mathbb{H}}
\newcommand{\Sp}{\mathrm{Sp}}
\newcommand{\spec}{\mathrm{sp}}
\newcommand{\ev}{\mathrm{ev}}
\newcommand{\id}{\mathrm{id}}
\newcommand{\nb}{\nobreakdash}
\newcommand{\dvr}{V}
\newcommand{\dvgen}{\pi}
\newcommand{\dvf}{K}
\newcommand{\resf}{k}
\newcommand*{\abs}[1]{\lvert #1\rvert}
\newcommand*{\pepsilon}{\epsilon}
\newcommand{\ul}{\underline}
\newcommand{\chaindR}{\mathsf{cdR}}
\newcommand{\homdR}{\mathsf{hdR}}
\begin{document}

\title[Weak completions, bornologies and rigid cohomology]{Weak completions, bornologies \\ and rigid cohomology}
\author{Guillermo Corti\~nas}
\address{Dep. Matem\'atica-IMAS, FCEyN-UBA\\ Ciudad Universitaria Pab 1\\
1428 Buenos Aires\\ Argentina}
\email{gcorti@dm.uba.ar}\urladdr{http://mate.dm.uba.ar/\~{}gcorti}

\author{Joachim Cuntz}
\address{Mathematisches Institut\\
  Westf\"alische Wilhelms-Universit\"at M\"unster\\
  Ein\-stein\-str.\ 62\\
  48149 M\"unster\\
  Germany}
\email{cuntz@math.uni-muenster.de}

\author{Ralf Meyer}
\address{Mathematisches Institut\\
  Georg-August Universit\"at G\"ottingen\\
  Bun\-sen\-stra\-\ss{}e 3--5\\
  37073 G\"ottingen\\
  Germany}
\email{rmeyer2@uni-goettingen.de}

\author{Georg Tamme}
\address{Universit\"at Regensburg\\
  Fakult\"at f\"ur Mathematik\\
  93040 Regensburg\\
  Germany}
\email{georg.tamme@ur.de}


\begin{abstract}
  Let~\(\dvr\) be a complete discrete valuation ring with residue
  field~\(\resf\) of positive characteristic and with fraction field~\(\dvf\) of characteristic~\(0\).
  We clarify the analysis behind the Monsky--Washnitzer completion
  of a commutative \(\dvr\)\nb-algebra using completions of bornological
  \(\dvr\)\nb-algebras.  This leads us to a functorial chain complex
  for a finitely generated commutative algebra over the residue field $\resf$ that computes its rigid cohomology in the sense of Berthelot.
\end{abstract}

\thanks{The first named author was supported by Conicet
and partially supported by grants UBACyT 20021030100481BA, PICT 2013-0454, and MTM2015-65764-C3-1-P (Feder funds).\\ The second named
  author was supported by DFG through CRC 878 and by the ERC through
  AdG 267\,079. \\
  The fourth named author was supported by DFG through CRC 1085.}

\maketitle
\begin{center}{Dedicated to Alain Connes on the occasion of his 70th birthday}\end{center}

\section{Introduction}
The problem of defining a cohomology theory with good properties for
an algebraic variety over a field~\(\resf\)
of non-zero characteristic has a long history.  In the breakthrough
paper~\cite{mw} by Monsky and Washnitzer, such a theory for smooth
affine varieties was constructed as follows.  Take a complete discrete
valuation ring~\(\dvr\) of mixed characteristic
with uniformizer~\(\dvgen\)
and residue field \(\resf=\dvr/\dvgen\dvr\)
(for example, \(\dvr\)
the ring of Witt vectors~\(W(\resf)\)
if~\(\resf\)
is perfect).  Let~\(\dvf\)
be the fraction field of~\(\dvr\).
Choose a \(\dvr\)\nb-algebra~\(R\)
which is a lift mod~\(\dvgen\)
of the coordinate ring of the variety and which is smooth
over~\(\dvr\)
(such a lift exists by~\cite{mme}).  Monsky--Washnitzer then introduce
the `weak' or dagger-completion~\(R^\updagger\)
of~\(R\)
and define their cohomology as the de~Rham cohomology of
\(R^\updagger\otimes_\dvr\dvf\).
The construction of a weak completion has become a basis for the
definition of cohomology theories in this context ever since.  The
Monsky--Washnitzer theory has been generalized by
Berthelot~\cite{berth} to ``rigid cohomology,'' which is a
satisfactory cohomology theory for general varieties over~\(\resf\).
Its definition uses certain de~Rham complexes on rigid analytic spaces.

The definition of the Monsky--Washnitzer cohomology as well as Berthelot's definition of
rigid cohomology depend on choices.
The chain complexes that compute them are not functorial for
algebra homomorphisms.  Only their homology is functorial.
Functorial complexes that compute
rigid cohomology have been constructed by Besser \cite{Besser}. However,
the construction is based on some abstract existence statements, and is not at all explicit.

In \cite{CCMT} we had developed a general framework for bornological structures on $\dvr$-algebras which allows in particular to generalize the weak completions of Monsky--Washnitzer to bornological versions of $J$-adic completions for an ideal $J$ in a $\dvr$-algebra. We had used this to study cyclic homology for such completions and to relate rigid cohomology to cyclic homology in this setting. As one major result we also had constructed a natural and explicit chain complex computing rigid cohomology for affine varieties over $\resf$. However \cite{CCMT} contained much more material than was needed to this latter end and the route to the construction of this complex was not the most direct possible. In the last section of \cite{CCMT} we had sketched a more direct approach. It is the aim of the present article to make this sketch explicit. At the same time we show that some of the basic arguments in \cite{CCMT} can be simplified significantly if one restricts the technical analysis to what is needed for that purpose. We obtain a conceptually simple construction of the natural complex computing rigid cohomology which is very much in the spirit of Grothendieck's infinitesimal cohomology for non-smooth varieties.

We proceed as follows. For a commutative $\dvr$-algebra $R$ we write $\ul{R}$ for $R\otimes \dvf$. We consider a presentation $J \to P \twoheadrightarrow R$ by a free commutative $\dvr$-algebra $P$. For the $m$-th powers of the kernel $J$, we obtain a projective system $(\comJ{P}{J^m})$ of bornological $J^m$-adic completions. Since the maps in this system are not surjective, the natural description of the de Rham cohomology of this pro-algebra is not via the de Rham complex of the projective limit but rather via the homotopy projective limit of the de Rham complexes (which is given as an explicit chain complex). As a consequence of homotopy invariance, this homotopy limit does not depend on the choice of a free presentation $P$ up to quasi-isomorphism. A functorial choice is $P= \dvr [A]$. One advantage of our approach is the fact that the formalism using bornological $J$-adic completions works fine also for algebras that are not Noetherian, such as $\dvr [A]$.

For the special choice of free presentation given by $\dvr [A]$, the homotopy limit of de Rham complexes mentioned above is completely explicit and manifestly functorial in $A$. Using results by Gro{\ss}e-Kl\"onne
in~\cite{gkdr} we show that it reproduces Berthelot's rigid cohomology.
\subsection{Notation}
\label{sec:notations}
Let~\(\dvr\) be a complete discrete valuation ring and
let~\(\dvgen\) be a generator for the maximal ideal in~\(\dvr\).
Let~\(\dvf\) be the fraction field of~\(\dvr\), that is,
\(\dvf=\dvr[\dvgen^{-1}]\).  Let \(\resf=\dvr/\dvgen \dvr\) be the
residue field. We will always assume that $\dvf$ has characteristic~\(0\).
Every element of~\(\dvf\) is written uniquely as \(x=u\dvgen^{\nu(x)}\),
where \(u\in \dvr\setminus\dvgen \dvr\) and \(\nu(x)\in\Z\cup\{-\infty\}\)
is the \emph{valuation} of~\(x\).  We fix \(0<\pepsilon<1\), and define
the \emph{absolute value} \(\abs{\hphantom{x}}\colon \dvf\to\R_{\ge
  0}\) by \(\abs{x}=\pepsilon^{\nu(x)}\) for \(x\neq 0\) and \(\abs{0}=0\).

If~\(M\) is a \(\dvr\)\nb-module, let~\(\ul{M}\) be the associated
\(\dvf\)\nb-vector space \(M \otimes \dvf\).  A \(\dvr\)\nb-module~\(M\) is
\emph{flat} if and only if the canonical map \(M \to \ul{M}\)
is injective, if and only if it is \emph{torsion-free}, that is,
\(\dvgen x=0\) implies \(x=0\).

Even though much of the foundational discussion works in greater generality, we will always assume that all algebras are commutative.
\section{Bornological modules over discrete valuation rings}
\label{sec:bornologies}

\begin{definition}
  Let~\(M\) be a \(\dvr\)\nb-module.  A (convex) \emph{bornology}
  on~\(M\) is a family~\(\bdd\) of subsets of~\(M\), called
  \emph{bounded} subsets, satisfying
  \begin{itemize}
  \item every finite subset is in~\(\bdd\);
  \item subsets and finite unions of sets in~\(\bdd\) are in~\(\bdd\);
  \item if \(S \in \bdd\), then the \(\dvr\)\nb-submodule generated
    by~\(S\) also belongs to~\(\bdd\).
  \end{itemize}
  A \emph{bornological $\dvr$\nb-module} is a $\dvr$\nb-module equipped with a bornology.\\
  A $\dvr$-linear map between bornological modules is said to be bounded if it maps bounded sets to bounded sets. A \emph{bornological \(\dvf\)\nb-vector space} is a bornological
  \(\dvr\)\nb-module such that multiplication by~\(\dvgen\) is an
  invertible map with bounded inverse.\\
  A bornological module equipped with an algebra structure is a bornological algebra, if the product of any two sets in $\bdd$ is again in $\bdd$ (i.e. if multiplication is bounded).
\end{definition}

\begin{definition}
  \label{def:converge}
  Let~\(X\)
  be a bornological \(\dvr\)\nb-module.
  Let~\((x_n)_{n\in\N}\)
  be a sequence in~\(X\)
  and let \(x\in X\).
  If \(S\subseteq X\)
  is bounded, then \((x_n)_{n\in\N}\)
  \emph{\(S\)\nb-converges}
  to~\(x\)
  if there is a sequence~\((\delta_n)_{n\in\N}\)
  in~\(\dvr\)
  with \(\lim \delta_n=0\)
  in the \(\dvgen\)\nb-adic
  topology and \(x_n-x \in \delta_n\cdot S\)
  for all \(n\in\N\).
  A sequence in~\(X\)
  \emph{converges} if it \(S\)\nb-converges
  for some bounded subset~\(S\)
  of~\(X\).
  If~\(S\)
  is bounded, then \((x_n)_{n\in\N}\)
  is \emph{\(S\)\nb-Cauchy}
  if there is a sequence~\((\delta_n)_{n\in\N}\)
  in~\(\dvr\)
  with \(\lim \delta_n=0\)
  and \(x_n-x_m \in \delta_l\cdot S\)
  for all \(n,m,l\in\N\)
  with \(n,m\ge l\).
  A sequence in~\(X\)
  is \emph{Cauchy} if it is \(S\)\nb-Cauchy
  for some bounded \(S\subseteq X\).
  The bornological \(\dvr\)\nb-module~\(X\)
  is \emph{separated} if limits of convergent sequences are unique.
  It is \emph{complete} if it is separated and for every bounded
  \(S\subseteq X\)
  there is a bounded \(S'\subseteq X\)
  so that all \(S\)\nb-Cauchy sequences are \(S'\)\nb-convergent.
\end{definition}
\begin{definition}
  \label{def:completion}
  The \emph{completion} of a bornological \(\dvr\)\nb-module~\(X\)
  is a complete bornological \(\dvr\)\nb-module~\(\comb{X}\)
  with a bounded map \(X\to \comb{X}\)
  that is universal in the sense that any map from~\(X\)
  to a complete bornological \(\dvr\)\nb-module
  factors uniquely through it.
\end{definition}
Completions of bornological \(\dvr\)\nb-modules
  always exist and may be constructed as follows.  Write
  \(X=\varinjlim {}(X_i)_{i\in I}\)
  as the inductive limit of the directed set of its bounded
  \(\dvr\)\nb-submodules.
  Then~\(\comb{X}\)
  is the separated quotient of the bornological inductive limit
  \(\varinjlim {}(\coma{X_i})_{i\in I}\) of the $\dvgen$-adic completions $\widehat{X_i}$. Note that a bornological $\dvr$-module $M$ is separated iff each of its bounded submodules is $\dvgen$-adically separated.

  A bounded \(\dvr\)\nb-linear map \(\varphi\colon X\to Y\) between bornological $\dvr$-modules is
  called a \emph{bornological quotient map} if any bounded
  subset~\(S\subseteq Y\) of~\(Y\) is \(\varphi(R)\) for some
  bounded subset~\(R\) of~\(X\).  Any bornological quotient map is surjective.
  \begin{lemma}
  \label{lem:completion_quotient}
  Taking completions preserves bornological quotient maps.
\end{lemma}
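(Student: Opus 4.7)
The idea is to unwind the construction of the bornological completion recalled above and verify the quotient property one layer at a time. Write $q_X\colon\varinjlim_i\coma{X_i}\twoheadrightarrow\comb X$ for the separated quotient map that defines $\comb X$ from the bornological inductive limit of the $\dvgen$-adic completions of the bounded submodules $X_i\subseteq X$, and similarly $q_Y$ for $Y$. By construction a subset of $\comb X$ is bounded iff it lies in $q_X(\coma{X_i})$ for some~$i$, so the proof reduces to exhibiting preimages at the level of bounded submodules before passing to the separated quotient.

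The key auxiliary fact I would establish first is that $\dvgen$-adic completion of $\dvr$-modules preserves surjections: for any surjective $f\colon A\twoheadrightarrow B$ the induced map $\coma f\colon\coma A\to\coma B$ is surjective. One proves this by the standard Cauchy-lifting argument: representing $b\in\coma B$ by a Cauchy sequence $(b_n)$ with $b_{n+1}-b_n=\dvgen^n c_n$ for some $c_n\in B$, lift each $c_n$ to $c_n'\in A$ and set $a_{n+1}\mathrel{:=} a_n+\dvgen^n c_n'$ starting from any $a_1\in f^{-1}(b_1)$; the Cauchy sequence $(a_n)$ has a limit in $\coma A$ mapping to~$b$.

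Now let $\varphi\colon X\to Y$ be a bornological quotient map and $T\subseteq\comb Y$ bounded. Choose a bounded $Y_{j_0}\subseteq Y$ with $T\subseteq q_Y(\coma{Y_{j_0}})$, and set $T'\mathrel{:=} q_Y^{-1}(T)\cap\coma{Y_{j_0}}\subseteq\coma{Y_{j_0}}$, so that $q_Y(T')=T$. The quotient-map hypothesis gives a bounded $X_{i_0}\subseteq X$ with $\varphi(X_{i_0})=Y_{j_0}$, and the auxiliary fact then yields a surjection $\coma{\varphi_{i_0}}\colon\coma{X_{i_0}}\twoheadrightarrow\coma{Y_{j_0}}$. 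Pulling $T'$ back gives $R_0\mathrel{:=}\coma{\varphi_{i_0}}^{-1}(T')\subseteq\coma{X_{i_0}}$ with $\coma{\varphi_{i_0}}(R_0)=T'$, and then $R\mathrel{:=} q_X(R_0)$ is a bounded subset of $\comb X$ satisfying $\comb\varphi(R)=q_Y(T')=T$ by naturality of the passage from bounded submodules to the separated quotient.

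The main delicate point is the interplay between the $\dvgen$-adic completion and the separated-quotient step: one must exhibit preimages at the level of the bounded submodules $\coma{X_i}$ before descending to $\comb X$, since the separated-quotient functor is not exact in general and a direct lifting through $\comb\varphi$ need not be possible. The $\dvgen$-adic lifting lemma itself is elementary but does the essential work, and crucially requires no Noetherian hypothesis on the~$X_i$, which matters for the later applications to large algebras such as $\dvr[A]$.
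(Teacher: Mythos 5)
Your proof is correct and follows the same route as the paper: both reduce the claim to the fact that $\dvgen$\nb-adic completion of modules preserves surjections, applied to the bounded submodules $X_i$ and $Y_j$ before passing to the separated quotient that yields $\comb{X}$ and $\comb{Y}$. You merely spell out the Cauchy-lifting argument and the bookkeeping with $q_X$, $q_Y$ that the paper's one-line proof leaves implicit.
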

\begin{proof}
  Completions are obtained from the \(\dvgen\)\nb-adic
  completions of the bounded \(\dvr\)\nb-submodules.
  This reduces the assertion to the fact that \(\dvgen\)\nb-adic
  completions preserve surjections.
\end{proof}

\section{Bornological algebras and \textit{J}-adic completions}
\label{sec:properties}
Recall that all our algebras are commutative.
\begin{definition}\label{def:borcomp}
  Let~$R$ be a $\dvr$\nb-algebra and~$J$ an ideal in~$R$. We define the $J$-adic bornology on~$\ul{R}$
  as the bornology generated by subsets~$S$ of the form
  \bgl\label{def:Jadic}
  S = C\sum_{n\geq 1} \lambda_n M^n,
  \egl
  where $C\in \dvf$, $\lambda_n \in \dvf$, $M$ is a finitely generated $\dvr$\nb-submodule
  of~$J$ and there is $\alpha <1$ such that $\abs{\lambda_n}\leq \abs{\dvgen}^{-\alpha n}$ for all $n$.\\
  We denote by $\comJ{R}{J}$ the bornological completion of~$\ul{R}$
  with respect to this bornology.
\end{definition}
\begin{remark}\label{rsub} Since the modules described in \eqref{def:Jadic} are contained in each other, the bornology generated by them consists of subsets of sums $S+M$ of a module $S$ as in \eqref{def:Jadic} and of a finitely generated $\dvr$-module $M$. In Definition \ref{def:borcomp} we obtain the same bornology if we assume that the $\lambda_n$ are of the form $\lambda_n=\dvgen^{\beta (n)-n}$ for an increasing function $\beta: \N \to \N$ satisfying $\lim_n (\beta(n)/n)= \gamma$ for some $\gamma >0$ and such that $\beta (i+j) \geq\beta (i) + \beta (j)$ (one may take for $\beta (n)$ the largest integer $\lfloor \gamma n\rfloor$ less or equal to $\gamma n$). For such a choice of $\lambda_n$ we have $\abs{\lambda_{i+j}}\leq \abs{\lambda_{i}\lambda_j}$.\end{remark}
Since the product of two sets as in \eqref{def:Jadic} is contained in one of the same form, multiplication on~$\ul{R}$ is bounded for the $J$\nb-adic bornology and $\comJ{R}{J}$ is a complete bornological algebra.

Let~\(R\) be a finitely generated, commutative \(\dvr\)\nb-algebra.
Monsky--Washnitzer~\cite{mw} define the \emph{weak
  completion}~\(R^\updagger\) of~\(R\) as the subset of the
\(\dvgen\)\nb-adic completion
\[
\coma{R} = \varprojlim\limits_{\scriptstyle j} R/\dvgen^j R
\]
consisting of elements~\(z\) having representations
\begin{equation}\label{MW}
  z = \sum_{j=0}^\infty \dvgen^j w_j
\end{equation}
with \(w_j \in M^{\kappa_j}\),
where \(M\)
is a finitely generated \(\dvr\)\nb-submodule
of~\(R\) containing $1$
and \(\kappa_j \le c(j+1)\)
for some constant \(c>0\)
depending on~\(z\).

If $J=\dvgen R$, then the $J$-adic bornology and the corresponding completion $\comb{R_J}$ also make sense on $R$, rather than on $\ul{R}$, if we allow only $C=1$ in the definition of the basic bounded set $S$ in \ref{def:borcomp}. In fact, then necessarily $\lambda_nM^n\subseteq R$ in \eqref{def:Jadic} since $M^n$ is divisible by $\dvgen^n$.
\bprop\label{ppn}
Consider the ideals $I=\dvgen R$ and $I^m=\dvgen^m R$ in the $\dvr$-algebra $R$. The $I$-adic and the $I^m$-adic bornologies on $R$ are the same.
\eprop
\bproof The $I$-adic and the $I^m$-adic bornologies are generated, respectively, by sets $S$ and $S'$ of the form
\[
S = \sum_{n\geq 0}\dvgen^{\beta (n)}M^n \qquad S' = \sum_{n\geq 0}\dvgen^{\beta' (n)}\dvgen^{-n}\dvgen^{mn} (M')^n =\sum_{n\geq 0}\dvgen^{\beta' (n)+(m-1)n}(M')^n,
\]
where $M$ and~$M'$ are finitely generated $\dvr$-submodules in~$R$ containing~\(1\) and where we may assume that $\gamma n\leq \beta(n) \leq \gamma n+1$ and $\gamma'n-1\leq \beta'(n) \leq \gamma'n$ for suitable $\gamma, \gamma'>0$. It is clear that any set of the form~$S'$ is contained in one of the form~$S$ with \(M=M'\).

Conversely, for each $\ell \in \N$, a set of the form~$S$ can be decomposed as a finite sum over $i=0,\ldots , \ell -1$, of modules
\(M^i\sum_{n\geq 0}\dvgen^{\beta (n\ell+i)}M^{n\ell}\).
Since \(M^i\subseteq M^\ell\), each summand is contained in
\(\sum_{n\geq 1}\dvgen^{\beta ((n-1)\ell)}M^{n\ell}\).
We have $\beta ((n-1)\ell)/n\geq \gamma\cdot \ell - \gamma \ell/n$.  For \(n\ge2\) and sufficiently large~$\ell$, this is $\geq \gamma' + (m-1)$.  For such~$\ell$, we have $S\subseteq M^\ell + S'$ with \(M' = M^\ell\).
\eproof
\bprop\label{pdag} Assume that~$R$ is finitely generated and let $I=\dvgen R$. The natural map $\comb{R_I} \to \widehat{R}$ induces isomorphisms $\comb{R_I}\cong R^\updagger$ and
$\comJ{R}{I}\cong \underline{R^\updagger}$. \eprop
\begin{proof}
Since completion commutes with ${}\otimes \dvf$, it suffices to prove the first isomorphism. We do this first for the case of a free algebra. Thus let $P=\dvr [x_1,\ldots, x_n]$ and $I'=\dvgen P$. The $I'$-adic bornology on $P$ is generated by $\dvr$-submodules of the form
\[M^{(\dvgen)} =\sum_{k\geq 0}\dvgen^k M^{k+1}\]
where $M$ is a finitely generated $\dvr$-submodule of $P$. Since any such $M$ is contained in a power of the standard $\dvr$-submodule $L$ generated by $1,x_1,\ldots ,x_n$, we may assume that $M=L^j$ for some $j$. For $M_j=L^j$, the $\dvgen$-adic completion $\overline{M_j^{(\dvgen)}}$ of $M_j^{(\dvgen)}$ consists of all power series of the form $\sum_\alpha \lambda_\alpha x^\alpha$ with $\abs{\dvgen\lambda_\alpha }\leq \abs{\dvgen}^{\abs{\alpha}/j}$ and $\nicefrac{\abs{\lambda_\alpha}}{\abs{\dvgen}^{\abs{\alpha}/j}} \to 0$ for $\abs{\alpha} \to \infty$. In particular, this completion is contained in $\widehat{P}$. Now, the bornological $I'$-adic completion of $P$ is the separated direct limit, with respect to $j$, of the $\dvgen$-adic completions $\overline{M_j^{(\dvgen)}}$. Since the $\overline{M_j^{(\dvgen)}}$ are isomorphic to subsets of $\widehat{P}$ which are contained in each other, this is just the union and gives exactly the set of elements $z$ described in equation \eqref{MW}.

Now assume that $R=P/J$ for an ideal $J$ in $P$. Then it is already shown in \cite{mw} that $P^\updagger /JP^\updagger\cong R^\updagger $. The proof is as follows: \\
  The ring $P^\updagger$ is Noetherian by \cite{ful}, and convergence of the geometric series implies that the ideal $\dvgen P^\updagger$ is contained in the Jacobson radical. Then, using Krull's intersection theorem, the finitely generated $P^\updagger$-module $P^\updagger/JP^\updagger$ is $\dvgen$-adically separated. Since it is also a quotient of a weakly complete algebra, it is then isomorphic to its weak completion. Thus the natural map $P/J \to P^\updagger/JR^\updagger$ extends to a map $(P/J)^\updagger \to P^\updagger/JP^\updagger$, which is inverse to the natural map in the converse direction.

  Finally, note that, for any finitely generated submodule $M$ of $R$, the $\dvgen$-adic completion $\overline{M^{(\dvgen)}}$ is mapped to the set of elements described in \eqref{MW}, under the natural map $\comb{R_I} \to \widehat{R}$, and all such elements $z$ are in the union of such images. Thus $\comb{R_I}$ is mapped onto $R^\updagger$. Now we get maps
  \[P^\updagger\cong \comb{P_{I'}} \longrightarrow \comb{R_I} \longrightarrow R^\updagger\]
  The first map annihilates $JP^\updagger$. We thus get a factorization
  \[P^\updagger\cong \comb{P_{I'}} \longrightarrow P^\updagger/JP^\updagger\longrightarrow \comb{R_{I}} \longrightarrow R^\updagger\]
  Here, the composition of the first and second arrow is surjective by Lemma \ref{lem:completion_quotient} and the composition of the second and third arrow is an isomorphism by the above. Therefore the last arrow must be an isomorphism.
\end{proof}
\begin{remark}\label{ndag} It is immediate from the definitions that the natural map $\comb{R_I} \to \widehat{R}$ maps $\comb{R_I}$ onto $R^\updagger$. The detour via the free presentation $P$ of $R$ in the proof above was needed only in order to obtain injectivity.\\
\end{remark}

\begin{definition}
  \label{def:alpha-tube}
  Let \(J\) be an ideal in $R$ and assume that~$R$ is flat as a \(\dvr\)\nb-module.  The \emph{tube algebra} of~\(R\)
  around~\(J\) is
  \begin{equation}
    \label{eq:alpha-tube}
    \tub{R}{J}{} \defeq
    \sum_{n=0}^\infty \dvgen^{-n}J^n\subseteq \ul{R },
  \end{equation}
  where the \(0\)-th summand is \(J^0 \defeq R\).  This is a
  \(\dvr\)\nb-algebra.
\end{definition}
\begin{theorem}
  \label{pro:linear_growth_on_tube}
  Let \(J\triqui R\) be an ideal with \(\dvgen \in J\).  There is an isomorphism
  \[
  \ul{\tub{R}{J^m}{}}^\updagger
  \cong \comJ{R}{J^m}.
  \]
  for each $m\in \N$.
\end{theorem}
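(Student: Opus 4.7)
The plan is to reduce the theorem to Proposition~\ref{pdag} applied to the tube algebra $T \defeq \tub{R}{J^m}{}$. Under the standing assumption of this section that $R$ is a finitely generated commutative $\dvr$\nb-algebra (hence Noetherian), the ideal $J^m$ is finitely generated by, say, $g_1,\dotsc,g_k$, and $T = R[\dvgen^{-1}g_1,\dotsc,\dvgen^{-1}g_k]$ is likewise a finitely generated $\dvr$\nb-algebra. Applying Proposition~\ref{pdag} to $T$ with $I=\dvgen T$ yields $\ul{T^\updagger}\cong\comJ{T}{\dvgen T}$. Since $\ul{T}=\ul{R}$ as $\dvf$\nb-vector spaces, the theorem is reduced to showing that the $\dvgen T$-adic bornology on $\ul{T}$ coincides with the $J^m$-adic bornology on $\ul{R}$.

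To handle this comparison, I would introduce the intermediate ideal $J^m T\triqui T$ and note the sandwich $\dvgen^m T\subseteq J^m T\subseteq \dvgen T$: the first inclusion comes from $\dvgen^m\in J^m$ (because $\dvgen\in J$), and the second from $J^m=\dvgen\cdot(\dvgen^{-1}J^m)\subseteq \dvgen T$. Applying Proposition~\ref{ppn} with $T$ in place of $R$ shows that the $\dvgen T$-adic and $\dvgen^m T$-adic bornologies on $\ul{T}$ agree, and the sandwich forces the $J^m T$-adic bornology to agree with both. It therefore suffices to identify the $J^m T$-adic bornology on $\ul{T}$ with the $J^m$-adic bornology on $\ul{R}$. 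The inclusion $J^m\subseteq J^m T$ makes one direction immediate, since any finitely generated $\dvr$\nb-submodule of $J^m$ is a fortiori a finitely generated submodule of $J^m T$.

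The substantive direction is to show that every basic $J^m T$-adic bounded set is $J^m$-adically bounded. Given a finitely generated $\dvr$\nb-submodule $M'\subseteq J^m T$, each generator is a $\dvr$\nb-combination of products $j\cdot t$ with $j\in J^m$ and $t=\sum_{i=0}^{p}\dvgen^{-i}a_i$, $a_i\in(J^m)^i$, which lies in $\sum_{i=0}^{p}\dvgen^{-i}(J^m)^{i+1}$. Absorbing the $j$'s and all factors composing the $a_i$'s into a common finitely generated $M_0\subseteq J^m$ yields $M'\subseteq \sum_{i=0}^{p}\dvgen^{-i}M_0^{i+1}$. The multinomial expansion gives $(M')^n\subseteq \sum_{s=0}^{pn}\dvgen^{-s}M_0^{s+n}$. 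Substituting this into a basic bounded set $\sum_n \dvgen^{\beta'(n)-n}(M')^n$ (with $\beta'$ growing linearly at rate $\gamma'>0$, in the parametrization of Remark~\ref{rsub}) and reindexing via $j=s+n$ exhibits it as contained in $\sum_j \dvgen^{\beta(j)-j}M_0^j$ with $\beta(j)\defeq \beta'(\lceil j/(p+1)\rceil)$, a function that grows at rate $\gamma'/(p+1)>0$. This is precisely a basic $J^m$-adic bounded set.

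The main obstacle I expect is the final reindexing, which must be packaged carefully so that the positive-linear growth condition on $\beta'$ survives as a positive-linear growth condition on~$\beta$. The crucial insight enabling this is the use of $J^m T$ rather than $\dvgen T$ directly: working with $\dvgen T$ would bring in contributions from~$R$ (such as elements of the form $\dvgen r$) whose powers are not controlled by~$J^m$, whereas every generator of an element of $J^m T$ already carries a factor in~$J^m$, and this is exactly what allows the reindexing to land in the $J^m$-adic bornology.
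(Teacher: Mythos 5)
Your proof is correct, and it takes a genuinely different route from the paper's. Both arguments begin by invoking Proposition~\ref{pdag} to reduce the theorem to comparing the $\dvgen T$\nb-adic bornology on $\ul{T}$ (where $T=\tub{R}{J^m}{}$) with the $J^m$\nb-adic bornology on $\ul{R}$, and both ultimately call on Proposition~\ref{ppn}, but the core computation differs. The paper works with the $\dvgen T$\nb-adic bornology directly: it writes a finitely generated submodule of $T$ as $M_0+\sum_{i=1}^N\dvgen^{-i}M^i$ with $M_0\subseteq R$ and $M\subseteq J^m$, expands the $n$\nb-th power via the binomial theorem using the superadditivity of $\beta$, and treats the two resulting factors separately---the $R$\nb-part $\sum_j\dvgen^{\beta(j)}M_0^j$ via Proposition~\ref{ppn} and the inclusion $\dvgen^mR\subseteq J^m$, and the $J^m$\nb-part by a reindexing. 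You instead replace $\dvgen T$ at the outset with the intermediate ideal $J^m T$, justified by applying Proposition~\ref{ppn} to $T$ itself together with the squeeze $\dvgen^m T\subseteq J^m T\subseteq \dvgen T$. This is what lets you dispense with the binomial split: every generator of a finitely generated submodule of $J^m T$ already carries a factor in $J^m$, so a single multinomial expansion $(M')^n\subseteq\sum_{s=0}^{pn}\dvgen^{-s}M_0^{s+n}$ followed by the reindexing $j=s+n$ lands directly in a $J^m$\nb-adic bounded set (the resulting exponent $\beta(j)=\beta'(\lceil j/(p+1)\rceil)$ still grows at a positive linear rate, which is all that Definition~\ref{def:borcomp} requires). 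The trade-off is that you invoke Proposition~\ref{ppn} once up front, on $T$, rather than on a specific module appearing mid-argument, and you must justify the squeeze; in exchange you avoid the two-factor analysis. Your observation that $T$ is a finitely generated $\dvr$\nb-algebra when $J^m$ is finitely generated, needed for the appeal to Proposition~\ref{pdag}, is a hypothesis the paper leaves implicit, so making it explicit is a modest improvement in rigor.
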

\bproof
In view of Proposition \ref{pdag} we have to show that the two bornologies on the algebra $\ul{\tub{R}{J^m}{}}=\ul{R}$ defining the $I$-adic, for $I=\dvgen \tub{R}{J^m}{}$, and the $J^m$-adic bornological completions are the same.

The first bornology is generated by sets of the form
  \bgl\label{bor1}S = S_{C,\beta,N,M_0,M}= C\sum_{n\geq 1}\dvgen^{\beta(n)} \left(M_0 + \sum_{i=1}^N \dvgen^{-i}M^{i}\right)^n\egl
  with $C\in K$, $\beta$ as in Remark \ref{rsub}, $N\in \N$, $M_0$ a finitely generated submodule of $R$ and $M$ a finitely generated submodule of $J^m$.
The second bornology is generated by modules of the form
  \bgl\label{bor2} S'= S'_{C,\beta,M} =C\sum_{n\geq 1}\dvgen^{\beta(n)-n}M^{n}
  \egl
  with $C\in K$, $\beta$ as in \ref{rsub} and $M$ a finitely generated submodule of $J^m$.
  If we set $M_0 =0$ and $N=1$ we get $S'_{C,\beta,M}=S_{C,\beta,1,0,M}$.  Thus $S'$ is bounded for the $I$-adic bornology.

  Conversely, consider a set $S$ as in \eqref{bor1}. We have
  \[\left(M_0 + \sum_{i=1}^N \dvgen^{-i}M^{i}\right)^n = \sum_{j+k=n}M_0^j\left( \sum_{i=1}^N \dvgen^{-i}M^{i}\right)^k\]
  Now, since $\abs{\dvgen^{\beta(j+k)}}\leq \abs{{\dvgen^{\beta(j)}}\dvgen^{\beta(k)}}$ it follows that
  \[\dvgen^{\beta(n)}\left(M_0 + \sum_{i=1}^N \dvgen^{-i}M^{i}\right)^n \subseteq \sum_{j+k=n}\left(\left(\dvgen^{\beta(j)}M_0^j\right)\dvgen^{\beta(k)}\left( \sum_{i=1}^N \dvgen^{-i}M^{i}\right)^k\right).\]
  Thus $S$ is contained in the product of the two submodules $\sum_{j\geq 0}\dvgen^{\beta(j)}M_0^j$ and $\sum_{k\geq 0}\dvgen^{\beta(k)}\left( \sum_{i=1}^N \dvgen^{-i}M^{i}\right)^k$; here we use the convention \(M_0^0=\dvr\cdot 1\).  Now
  \[\left( \sum_{i=1}^N \dvgen^{-i}M^{i}\right)^k \subseteq \sum_{i=1}^{Nk} \dvgen^{-i}M^{i},\]
  so that $\sum_{k\geq 0}\dvgen^{\beta(k)}\left( \sum_{i=1}^N \dvgen^{-i}M^{i}\right)^k$ is contained in a set of the form $\sum_{i\geq 0} \lambda_i\dvgen^{-i}M^{i}$ with $\abs{\lambda_i}\leq \abs{\dvgen}^{(\gamma/N) i}$ with $\gamma >0$. Thus it is bounded in the $J^{m}$-adic bornology.

 It remains to show that $\sum_{j\geq 1}\dvgen^{\beta(j)}M_0^j$ is bounded in the $J^m$-adic bornology. We have
  \[
\sum_{j\geq 1}\dvgen^{\beta(j)}M_0^j = \sum_{j\geq 1}\dvgen^{\beta(j)}\dvgen^{-j}(\dvgen M_0)^j.
\]
  Therefore, the sum is bounded in the $J_0$-adic bornology for $J_0=\dvgen R$.  By Proposition~\ref{ppn}, it is also bounded in the $J_0^m$-adic bornology. As $J_0^m\subseteq J^m$ it is then bounded in the $J^m$-adic bornology.
  \eproof

\section{De Rham cohomology for bornological pro-algebras}
\begin{definition}
  \label{def:dR}
  Let~$R$ be a commutative $\dvr$-algebra. Then
  \(\Omega_{\ul{R}}^*\) denotes the dg-algebra of K\"ahler
  differential forms for~\(\ul{R}\) (over the ground field~\(\dvf\))
  and \((\Omega_{\ul{R}}^*,d)\) is the de Rham complex
  for~\(\ul{R}\).  The de Rham complex for $\ul{R^\updagger}$ is
    \((\ul{R^\updagger}\otimes_{\ul{R}}\Omega_{\ul{R}}^*,d)\).  For
    an ideal $J$ in $R$, the de Rham complex
    for $\comJ{R}{J}$ is
    \((\comJ{R}{J}\otimes_{\ul{R}}\,\Omega_{\ul{R}}^*,d)\).
\end{definition}

\begin{remark}
  \label{rem:dR}
  For finitely generated~$R$,
  it is not difficult to show that
  $\underline{R}^\updagger\otimes_{\ul{R}}\Omega_{\ul{R}}^*$ and
  $\comJ{R}{J} \otimes_{\ul{R}}\Omega_{\ul{R}}^*$ are the
  bornological $\dvgen R$-adic and $J$-adic completions of $\Omega_{\ul{R}}^*$,
  respectively. This justifies our definitions of the de Rham
  complexes for $\underline{R}^\updagger$ and $\comJ{R}{J}$.
\end{remark}

\subsection{Homotopy invariance}
\label{sec:homotopy_invariance}
Let \(R\) and~\(S\)
be complete bornological \(\dvr\)\nb-\hspace{0pt}algebras
and let \(f_0,f_1\colon R\rightrightarrows S\)
be bounded unital algebra homomorphisms.
\begin{definition}A (dagger-continuous)
\emph{homotopy between \(f_0\)
  and~\(f_1\)}
is a bounded unital algebra homomorphism
\(F\colon R\to S\hotimes \dvr[x]^\updagger\)
with \((\id_S\hotimes \ev_t)\circ F = f_t\)
for \(t=0,1\). Here $\hotimes$ stands for the completed tensor product, i.e., the completion of the algebraic tensor product with respect to the bornology generated by tensor products of bounded sets.\end{definition}
\begin{lemma}
  \label{lem:HP_dagger_1-variable}
  The kernel of the chain map from
  $\left((\dvf\otimes\dvr[x]^\updagger)\otimes_{\dvf[x]}
    \Omega_{\dvf[x]}^*, d\right)$ to~$\dvf$ is contractible
  through a bounded chain homotopy.
\end{lemma}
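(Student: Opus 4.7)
The plan is to realise the contracting homotopy as $p$-adic formal antiderivation and to reduce the bornological boundedness to the standard estimate on the valuations of integers.

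First I would spell out the complex concretely. Since $\Omega^*_{\dvf[x]}$ is concentrated in degrees $0$ and $1$, the complex in question reduces to
\[
\ul{\dvr[x]^\updagger} \xrightarrow{\,d\,} \ul{\dvr[x]^\updagger}\,dx,
\]
and the chain map to $\dvf$ is evaluation $\ev_0$ at $x=0$ in degree $0$ (and zero in degree $1$). Its kernel $K^\bullet$ therefore has $K^0=\ker\ev_0$ and $K^1$ the full module of $1$-forms.

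Next I would write down the candidate homotopy $h$ by term\nb-by\nb-term antiderivation: for $g=\sum_{n\ge 0}a_nx^n$, set
\[
h(g\,dx) \defeq \sum_{n\ge 0}\frac{a_n}{n+1}\,x^{n+1},
\]
and put $h=0$ in degree~$0$. Once $h$ is known to take values in $K^\bullet$, the identity $dh+hd=\id_{K^\bullet}$ is immediate: on $g\,dx$ one has $dh(g\,dx)=g\,dx$, and on $f\in K^0$ one has $hd(f)=f(x)-f(0)=f(x)$ by the fundamental theorem of calculus.

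The main obstacle, and the essential analytic content, is the boundedness of $h$ on the dagger completion. This rests on the elementary valuation estimate $\nu(n+1)=e\cdot v_p(n+1)\le e\log_p(n+1)$, where $p$ is the residue characteristic and $e=\nu(p)$ the absolute ramification index. Hence $\abs{1/(n+1)}\le (n+1)^{e\log_p(1/\pepsilon)}$ grows only polynomially in $n$ and is therefore dominated by $\abs{\dvgen}^{-\eta n}$ for any $\eta>0$ once $n$ is large. By (the proof of) Proposition~\ref{pdag}, the bornology of $\ul{\dvr[x]^\updagger}$ is generated by the $\dvgen$\nb-adic completions $\overline{M_j^{(\dvgen)}}$, whose elements are power series $\sum\lambda_n x^n$ with $\abs{\lambda_n}\le\abs{\dvgen}^{n/j-1}$ and the appropriate decay. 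For any $j'>j$ the polynomial correction from $1/(n+1)$ is absorbed into the coarser bound $\abs{\dvgen}^{n/j'-1}$ once $n$ is large; the finitely many exceptional small\nb-$n$ terms only contribute a harmless multiple of a fixed power of $\dvgen^{-1}$. Thus $h$ maps $\overline{M_j^{(\dvgen)}}\,dx$ into $\dvgen^{-k}\,\overline{M_{j'}^{(\dvgen)}}$ for some $k$, which yields both that $h$ lands in $\ul{\dvr[x]^\updagger}$ and that it is bornologically bounded, completing the proof.
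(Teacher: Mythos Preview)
Your proof is correct and uses the same contracting homotopy as the paper, namely term-by-term integration \(i\colon \sum a_n x^n\,dx\mapsto\sum \frac{a_n}{n+1}x^{n+1}\). The paper's argument is essentially a one-line assertion that this map is bounded on \(\dvf\otimes\dvr[x]^\updagger\); you supply the justification the paper omits, via the estimate \(\nu(n+1)\le e\log_p(n+1)\) and the explicit description of the generating bounded sets \(\overline{M_j^{(\dvgen)}}\), so your argument is a fleshed-out version of the same proof rather than a different approach.
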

\begin{proof}
We have $\Omega_{\dvf[x]}^1 =\dvf[x] d(x) \cong K\otimes \dvr [x]$ and $\Omega_{\dvf[x]}^j =0$ for $j\geq 2$. Recall that~$\dvf$ has characteristic~$0$. The absolute integration map
  \(i\colon \sum a_n x^n \mapsto \sum \frac{a_n}{n+1} x^{n+1}\)
  is a bounded linear map on~\(\dvf\otimes \dvr[x]^\updagger\).
  It satisfies \(d\circ i = \id\)
  and \(i\circ d = \id - P_0\),
  where~\(P_0\) is defined by \(P_0(x^n) = \delta_{n,0} x^n\).  Hence
  \(d\colon \dvf\otimes \dvr[x]^\updagger\to \dvf\otimes \dvr[x]^\updagger\) is homotopy
  equivalent to~\(\dvf\) concentrated in degree~\(0\).
\end{proof}

\begin{proposition}
  \label{pro:dagger-continuous_homotopy_HP}
  Assume that $R,S$ are finitely generated
    commutative $\dvr$-algebras, $J\lhd R$ and $I \lhd S$ are ideals, and let $P=\comJ{R}{J}$,
    $Q=\comJ{S}{I}$. Let \(f_0,f_1\colon P\rightrightarrows Q\) be bounded unital
  homomorphisms.  Assume that
  there is a dagger-continuous homotopy between
  \(f_0\) and~\(f_1\). Then the maps induced by $f_0, f_1$
    between the de Rham complexes
    \((\comJ{R}{J}\otimes_{\ul{R}}\,\Omega_{\ul{R}}^*,\,d)\)
    and
    \((\comJ{S}{I}\otimes_{\ul{S}}\,\Omega_{\ul{S}}^*,\,d)\)
    are homotopic with a bounded \(\dvf\)\nb-linear chain homotopy.

\end{proposition}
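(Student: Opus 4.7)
My plan is to factor both $f_0^*$ and $f_1^*$ through the chain map induced by the homotopy $F$, and then to contract away the ``extra'' $\dvf[x]^\updagger$-direction using the absolute integration of Lemma~\ref{lem:HP_dagger_1-variable}. All the concrete bookkeeping mimics the classical Cartan-calculus proof of homotopy invariance of de~Rham cohomology.

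First, the bounded algebra map $F\colon P\to Q\hotimes\dvr[x]^\updagger$ induces a bounded chain map
\[
F^*\colon\comJ{R}{J}\otimes_{\ul R}\Omega^*_{\ul R}\longrightarrow \Omega^*_{\mathrm{tgt}},
\]
where $\Omega^*_{\mathrm{tgt}}$ denotes the de~Rham complex of the bornological algebra $\ul{Q\hotimes\dvr[x]^\updagger}$. I will identify
\[
\Omega^*_{\mathrm{tgt}}\cong\bigl(\comJ{S}{I}\otimes_{\ul S}\Omega^*_{\ul S}\bigr)\hotimes_\dvf\bigl(\dvf[x]^\updagger\oplus\dvf[x]^\updagger\cdot dx\bigr).
\]
Since $\Omega^{\ge 2}_{\dvf[x]^\updagger}=0$, every form of total degree $n$ decomposes uniquely as $\eta_0+\eta_1\wedge dx$ with $\eta_0$ of $\ul S$\nobreakdash-degree~$n$ and $\eta_1$ of $\ul S$\nobreakdash-degree~$n-1$, each tensored with $\dvf[x]^\updagger$. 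The homotopy hypothesis then reads $f_t^*=(\id\hotimes\ev_t)_*\circ F^*$ for $t=0,1$, where $(\id\hotimes\ev_t)_*$ sends $\eta_0+\eta_1\wedge dx$ to $\ev_t(\eta_0)\in\comJ{S}{I}\otimes_{\ul S}\Omega^n_{\ul S}$.

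Second, I define a bounded chain homotopy $s$ between $(\id\hotimes\ev_0)_*$ and $(\id\hotimes\ev_1)_*$ by
\[
s(\eta_0+\eta_1\wedge dx)\defeq\pm(\id\hotimes\ev_1)\bigl((\id\hotimes i)(\eta_1)\bigr),
\]
with a degree-dependent sign and with $i$ the absolute integration of Lemma~\ref{lem:HP_dagger_1-variable}. Boundedness follows from boundedness of $i$ and $\ev_1$ and the definition of the tensor-product bornology. Expanding the total differential on $\Omega^*_{\mathrm{tgt}}$ as $d\eta=d_Q\eta_0+\bigl(\pm\partial_x\eta_0+d_Q\eta_1\bigr)\wedge dx$ and using $d\circ i=\id$ together with $i\circ d=\id-\ev_0$ on $\dvf[x]^\updagger$, a direct Cartan-type calculation yields, up to a sign, $ds+sd=(\id\hotimes\ev_1)_*-(\id\hotimes\ev_0)_*$. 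Setting $h\defeq s\circ F^*$ then produces the desired bounded $\dvf$-linear chain homotopy between $f_0^*$ and $f_1^*$.

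The main technical obstacle is the identification of $\Omega^*_{\mathrm{tgt}}$ used above. To make it precise I would first recognise $Q\hotimes\dvr[x]^\updagger$ as a bornological adic completion of $S\otimes_\dvr\dvr[x]$, specifically as $\comJ{S\otimes_\dvr\dvr[x]}{J'}$ with $J'\defeq I\cdot(S\otimes\dvr[x])+\dvgen\cdot(S\otimes\dvr[x])$, by combining the description of $\dvr[x]^\updagger$ in Propositions~\ref{ppn} and~\ref{pdag} with the definition of the tensor-product bornology; matching bounded sets here is in the same spirit as the computation in Theorem~\ref{pro:linear_growth_on_tube}. Remark~\ref{rem:dR} then presents both $\Omega^*_{\mathrm{tgt}}$ and the claimed completed tensor product as bornological completions of the algebraic de~Rham complex of $\ul S\otimes_\dvf\dvf[x]$, and the identification follows. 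Once this is granted, the rest is routine Cartan calculus with bounded operators.
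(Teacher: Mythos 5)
Your proof is correct and follows essentially the same route as the paper: factor $f_t^*$ through $F^*$, decompose the de Rham complex of $Q\hotimes\dvr[x]^\updagger$ as the completed tensor product of the de Rham complexes of the two factors, and then contract the $\dvf[x]^\updagger$-direction using the absolute integration of Lemma~\ref{lem:HP_dagger_1-variable}. The paper compresses the Cartan-type computation into the phrase ``the assertion then follows from Lemma~\ref{lem:HP_dagger_1-variable},'' whereas you write the homotopy $s(\eta_0+\eta_1\wedge dx)=\pm(\id\hotimes\ev_1\circ i)(\eta_1)$ explicitly and verify $ds+sd=(\id\hotimes\ev_1)_*-(\id\hotimes\ev_0)_*$; this is exactly what that lemma is implicitly feeding into. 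You also flag, and sketch a proof of, the identification of $\Omega^*_{\mathrm{tgt}}$ with the completed tensor product — a point the paper simply asserts — which is a reasonable thing to be careful about.
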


\begin{proof}
The de Rham complex for $\comJ{S}{I} \hotimes\dvr [x]^\updagger$ is $(\comJ{S}{I}\otimes_{\ul{S}}\Omega_{\ul{S}}^* ) \hotimes (\dvr [x]^\updagger \otimes_{\dvr[x]} \Omega_{\dvf[x]}^*)$. The assertion then follows from Lemma \ref{lem:HP_dagger_1-variable}.
\end{proof}

\subsection{Homotopy limit}
	\label{subsec:homotopy-limit}
Let~\(A\)
be a commutative \(\resf\)\nb-algebra.
We may view~\(A\)
as a \(\dvr\)\nb-algebra
with \(\dvgen\cdot A=0\).
Let \(S\subseteq A\)
be a set of generators for~\(A\).
Let~$P_S=\dvr[S]$ be the free commutative algebra on the set~$S$.
The inclusion map \(S\to A\)
defines a unital homomorphism \(p\colon P_S\to A\)
because~\(A\)
is commutative.  This is surjective because~\(S\)
generates~\(A\)
by assumption.  Let \(J\defeq \ker p\);
this contains~\(\dvgen\)
because \(\dvgen\cdot A=0\).
We shall be interested in the completions
\(\comJ{P_S}{J^m}\),
which are complete bornological \(\dvr\)\nb-algebras,
and in the chain complexes \((\comJ{P_S}{J^m}\otimes_{\ul{P_S}}\,\Omega_{\ul{P_S}}^*,d)\) that compute their de Rham cohomology. For the special choice of $S=A$ we denote these complexes by $\chaindR^{[m]}(A)$ and their homology by $\homdR_*^{[m]}(A)$.

Since $J^{m+1}\subseteq J^m$, the identity map from $P_S$ equipped with the $J^{m+1}$-adic bornology to $P_S$ equipped with the $J^m$-adic bornology is bounded. It induces a map $\comJ{P_S}{J^{m+1}}\to \comJ{P_S}{J^m}$, as well as a map between the associated de Rham complexes. Now, the natural definition for the homology of a pro-algebra, given by a projective system $(R_m)$ of algebras $R_m$, is the homology of the homotopy limit of the complexes computing the homology of the $R_m$.

Here, we have a projective system of
complexes defined by the maps
\[
\sigma_{m}\colon \; \chaindR^{[m+1]}(A)\to \chaindR^{[m]}(A).
\]
We write  \(\chaindR^\rig(A)\) for the homotopy limit of \(\{\chaindR^{[m]}(A)\}\) . Thus, by definition, \(\chaindR^\rig(A)\) is the mapping cone of the bounded
chain map
\begin{equation}
  \label{eq:hoprojlim_definition}
  1-\sigma:\prod_{m=1}^\infty \chaindR^{[m]}(A) \to
  \prod_{m=1}^\infty \chaindR^{[m]}(A),\qquad
  (x_m) \mapsto (x_m - \sigma_{m}(x_{m+1})),
\end{equation}
shifted by~$-1$. Hence there is an exact triangle
\begin{equation}\label{triang}
\xymatrix{\chaindR^{\rig}(A)\ar[r]&\prod_{m=1}^\infty \chaindR^{[m]}(A)\ar[r]^{1-\sigma}&\prod_{m=1}^\infty \chaindR^{[m]}(A)}.
\end{equation}
\begin{definition} The cohomology of \(\chaindR^\rig(A)\) is denoted by \(\homdR^\rig(A)\).\end{definition} Taking cohomology in \eqref{triang} we obtain
a short exact sequence
\[
0 \to \mathop{\lim\nolimits^1}_{m}
\homdR^{[m]}_{*-1}(A) \to
\homdR^\rig_*(A) \to
\lim_{m} \homdR^{[m]}_*(A) \to 0.
\]

\subsection{Different presentations}
\label{sec:compl}

The free presentation~\(\dvr[A]\) of~\(A\) used to
define~\(\chaindR^\rig(A)\) is natural but possibly very large.  For
computations, we want to use smaller generating sets.  Here we
  show that these give homotopy equivalent chain complexes.

\begin{proposition}
  \label{pro:free_presentation_independent}
  Let~\(A\) be a \(\resf\)\nb-algebra and let \(S\subseteq A\) be a
  generating set.  Let let \(J^S\triqui
  \dvr[S]\) be the kernel of the canonical homomorphism \(p^S\colon
  \dvr[S]\to A\).

  There are sequences of compatible bounded homomorphisms
  \[
  f\colon \comJ{\dvr[S]}{(J^S)^m} \to \comJ{\dvr[A]}{J^m},\qquad
  g\colon \comJ{\dvr[A]}{J^m} \to \comJ{\dvr[S]}{(J^S)^m}
  \]
  for $m\in\N$, such
  that $f\circ g$ and $g\circ f$ are homotopic to the identity
  through dagger-continuous homotopies that are compatible
  for \(m\in\N\).
\end{proposition}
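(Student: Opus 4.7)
The plan is to construct $f,g$ from explicit lifts, arrange the choices so that $g\circ f=\id$ holds on the nose, and connect $f\circ g$ to $\id$ by a linear homotopy in $x$.

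\textbf{Construction of $f,g$ and the identity $g\circ f=\id$.} For each $a\in A$ pick $\tilde a\in\dvr[S]$ with $p^S(\tilde a)=a$, making the canonical choice $\tilde s=s$ whenever $s\in S$. Since $\dvr[A]$ and $\dvr[S]$ are free commutative $\dvr$-algebras, these data define $\dvr$-algebra homomorphisms $g_0\colon\dvr[A]\to\dvr[S]$, $a\mapsto\tilde a$, and $f_0\colon\dvr[S]\to\dvr[A]$, $s\mapsto s$, satisfying $p^S\circ g_0=p$ and $p\circ f_0=p^S$. Hence $g_0(J^m)\subseteq (J^S)^m$ and $f_0((J^S)^m)\subseteq J^m$ for all $m$, so by Definition~\ref{def:borcomp} both maps are bounded and extend to bounded algebra homomorphisms $f,g$ between the completions, compatible with the reduction maps as $m$ varies. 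The convention $\tilde s=s$ gives $g_0\circ f_0=\id$ on generators, hence $g\circ f=\id$ holds strictly on the completions.

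\textbf{The homotopy.} Let $H_0\colon\dvr[A]\to\dvr[A][x]$ be the $\dvr$-algebra map determined on generators by $H_0(a)=xa+(1-x)f_0(\tilde a)$, so that $H_0|_{x=1}=\id$ and $H_0|_{x=0}=f_0\circ g_0$. Writing $y=1-x$ and $\zeta_a\defeq f_0(\tilde a)-a\in J$, one has $H_0(a)=a+y\zeta_a$. The key coefficient-wise claim is
\[ H_0(J^m)\ \subseteq\ J^m\cdot\dvr[A][y], \]
that is, every $y$-coefficient of $H_0(z)$ for $z\in J^m$ lies in $J^m$. Indeed, composing $H_0$ with the projection $\dvr[A][y]\twoheadrightarrow A[y]$ equals the composition $\dvr[A]\xrightarrow{p}A\hookrightarrow A[y]$ (constant in~$y$), since $\zeta_a\mapsto 0$ in $A$; hence $H_0(J)\subseteq J\cdot\dvr[A][y]$, and the $m$-th power follows because $H_0$ is a ring homomorphism.

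\textbf{Boundedness and conclusion.} The preceding step ensures that, for any finitely generated $\dvr$-submodule $M\subseteq J^m$, we have $H_0(M)\subseteq M^*\cdot N$ for some finitely generated $M^*\subseteq J^m$ and $N=\dvr+\dvr y+\cdots+\dvr y^d\subseteq\dvr[y]$ of bounded degree $d=d(M)$. Given a bounded set $\sum_{n\ge 1}\lambda_n M^n$ in the $J^m$-adic bornology with $\abs{\lambda_n}\le\abs{\dvgen}^{-\alpha n}$, $\alpha<1$, fix $\alpha_1\in(\alpha,1)$ and split $\lambda_n=\lambda_n^{(1)}\lambda_n^{(2)}$ with $\abs{\lambda_n^{(1)}}=\abs{\dvgen}^{-\alpha_1 n}$ and $\abs{\lambda_n^{(2)}}\le\abs{\dvgen}^{(\alpha_1-\alpha)n}$. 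Then $\sum_n\lambda_n^{(1)}(M^*)^n$ is bounded in $\comJ{\dvr[A]}{J^m}$, and $\sum_n\lambda_n^{(2)}N^n=\sum_n(\lambda_n^{(2)}\dvgen^{-n})(\dvgen N)^n$ is bounded in $\dvr[y]^\updagger$, because $\dvgen N\subseteq\dvgen\dvr[y]$ is finitely generated and the new exponent $1-\alpha_1+\alpha$ is strictly less than~$1$. Thus $H_0$ is bounded into the tensor-product bornology on $\dvr[A]\otimes\dvr[x]\hookrightarrow\comJ{\dvr[A]}{J^m}\hotimes\dvr[x]^\updagger$ and extends by the universal property of completion to the desired dagger-continuous homotopy $H\colon\comJ{\dvr[A]}{J^m}\to\comJ{\dvr[A]}{J^m}\hotimes\dvr[x]^\updagger$. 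Compatibility in $m$ is automatic because $H_0$ is independent of $m$, and for $g\circ f=\id$ the constant homotopy works. The main technical hurdle is this boundedness verification, which succeeds precisely because the coefficient-wise inclusion above prevents any loss of $J$-depth along the homotopy direction.
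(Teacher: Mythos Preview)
Your proof is correct and follows essentially the same approach as the paper: the same maps $f,g$ induced by $S\hookrightarrow A$ and a choice of lifts, the identity $g\circ f=\id$ from the convention $\tilde s=s$, the same linear homotopy $H_0(a)=xa+(1-x)f_0g_0(a)$, and the same key inclusion $H_0(J)\subseteq J\otimes\dvr[x]$. The only difference is packaging of the boundedness step: the paper first extends $H_0$ to the $(J^m\otimes\dvr[t])$-adic completion of $\dvf[A]\otimes\dvr[t]$ and then identifies that completion with $\comJ{\dvr[A]}{J^m}\hotimes\dvr[t]^\updagger$ via the same splitting $\beta=\beta_1+\beta_2$ that you carry out directly as $\lambda_n=\lambda_n^{(1)}\lambda_n^{(2)}$.
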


\begin{proof}
  The inclusion map \(S\to A\) induces a unital homomorphism
  \(f\colon \dvr[S]\to\dvr[A]\).  It maps the kernel~\(J^S\) of
  \(p^S\colon \dvr[S]\to A\) into the kernel~\(J\) of \(p\colon
  \dvr[A]\to A\) because \(p\circ f=p^S\).  Hence it extends
  uniquely to a bounded unital algebra homomorphism \(f\colon
  \comJ{\dvr[S]}{(J^S)^m} \to
  \comJ{\dvr[A]}{J^m}\).  Since~\(S\)
  generates~\(A\), the homomorphism \(p^S\colon \dvr[S]\to A\) is
  surjective.  For each \(a\in A\), choose some \(g(a)\in \dvr[S]\)
  with \(p^S(g(a))=a\); we may assume \(g(a)=a\) for all \(a\in S\).
  These choices define a unital homomorphism \(g\colon
  \dvr[A]\to\dvr[S]\). 
  By construction, \(p^S\circ g(a)=p(a)\) for all \(a\in A\).  This
  implies \(p^S\circ g=p\) and hence \(g(J) \subseteq J^S\).
  Hence~\(g\) extends uniquely to a bounded unital algebra
  homomorphism \(g\colon \comJ{\dvr[A]}{J^m}
  \to \comJ{\dvr[S]}{(J^S)^m}\).  We have \(g\circ f =
  \id_{\dvr[S]}\) because \(g(a)=a\) for all \(a\in S\).

  The homomorphism \(f\circ g\colon \dvr[A]\to\dvr[A]\) is homotopic
  to the identity map through the homotopy \(H\colon
  \dvr[A]\to\dvr[A,t]\) defined by \(H(a) \defeq t\cdot a+ (1-t)
  fg(a)\) for all \(a\in A\).  Since \(p\circ fg = p\), \(H\) maps
  \(J\defeq \ker(p)\triqui \dvr[A]\) into \(J\otimes \dvr[t]\), the
  kernel of \(p\otimes \id_{\dvr[t]} \colon \dvr[A,t] =
  \dvr[A]\otimes\dvr[t] \to A\otimes \dvr[t]\).

  By the universal property of a completion,
  \(H\) extends
  uniquely to a bounded unital algebra homomorphism from $\comJ{\dvr[A]}{J^m}$ into the $J^m\otimes \dvr[t]$-adic completion of $\dvf[A]\otimes\dvr[t]$. But this completion is $\comJ{\dvr[A]}{J^m} \hotimes
  \dvr[t]^\updagger$. In fact, a standard bounded module of the form $S=\sum_n \dvgen^{\beta (n)-n}M^n$ with $M$ a finitely generated submodule of $J^m\otimes \dvr[t]$, is contained in the tensor product of modules $S_1=\sum_n \dvgen^{\beta_1 (n)-n}M_1^n$ and $S_2=\sum_n \dvgen^{\beta_2 (n)}M_2^n$ with $M_1, M_2$ finitely generated submodules of $J^m$ and $\dvr[t]$, respectively, and $\beta_1,\beta_2$ such that $\beta(n)=\beta_1(n)+\beta_2(n)$ with $\lim_n\beta_1(n)/n, \lim_n\beta_2(n)/n >0$. This shows that the $J^m\otimes\dvr[t]$-adic bornology is contained in the tensor product bornology. The converse inclusion of bornologies follows from the fact that $\dvr[A]\otimes\dvgen^m\dvr[t]\subseteq J^m\otimes \dvr[t]$ and Proposition \ref{ppn}.
  In conclusion~\(H\) is a
  dagger-continuous homotopy between \(f\circ g\) and the identity
  map.
\end{proof}

\begin{corollary}
  \label{cor:anypres}
  Let $A$ and~$S$ be as in
  Proposition~\textup{\ref{pro:free_presentation_independent}}.
  Then
  \(\chaindR^\rig(A)\) is naturally chain homotopy equivalent to the
  homotopy limit of the de Rham complexes for $\comJ{\dvr [S]}{(J^S)^m}$, for any generating set $S\subseteq A$.
\end{corollary}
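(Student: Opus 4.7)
The plan is to upgrade the levelwise data produced by Proposition \ref{pro:free_presentation_independent} to a chain homotopy equivalence between the two homotopy limits, using that the homotopies are compatible in $m$.

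Write $C_m \defeq \chaindR^{[m]}(A)$ and $D_m$ for the de Rham complex of $\comJ{\dvr[S]}{(J^S)^m}$. The maps $f$ and $g$ of Proposition~\ref{pro:free_presentation_independent} are bounded unital algebra homomorphisms compatible with the structure maps of both pro-algebras, so they induce bounded chain maps $f_*\colon D_m \to C_m$ and $g_*\colon C_m \to D_m$ that commute with the respective transition maps $\sigma_m$ (on the $\chaindR^{[\bullet]}(A)$ side) and the analogous transition maps $\sigma_m^S$ on the $D_\bullet$ side. Applying Proposition~\ref{pro:dagger-continuous_homotopy_HP} to the dagger-continuous homotopies constructed in Proposition~\ref{pro:free_presentation_independent}, we obtain bounded $\dvf$-linear chain homotopies $h_m\colon C_m \to C_{m+1}^{\bullet-1}$-style maps with $\id - f_* g_* = d h_m + h_m d$, and similarly for $g_* f_* - \id$ on $D_m$. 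The crucial point, which follows by inspecting the proof of Proposition~\ref{pro:free_presentation_independent}, is that the dagger-continuous homotopy $H$ and its counterpart are induced by a single algebra-level homotopy that respects the projective systems in $m$; hence the chain homotopies $h_m$ can be chosen compatibly, i.e.\ $\sigma_m \circ h_{m+1} = h_m \circ \sigma_{m+1}$.

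Next I promote these data to the homotopy limit. The products $\prod_{m\ge 1} C_m$ and $\prod_{m\ge 1} D_m$ inherit chain maps $\prod f_*$, $\prod g_*$, and chain homotopies $\prod h_m$, and the compatibility $\sigma_m h_{m+1} = h_m \sigma_{m+1}$ together with naturality of $f_*, g_*$ says precisely that $\prod f_*$ and $\prod g_*$ commute (strictly) with the endomorphisms $1-\sigma$ and $1-\sigma^S$ on the two products, while $\prod h_m$ provides a chain homotopy relating $\prod (g_* f_*)$ and $\id$ that also commutes with $1-\sigma^S$ (and symmetrically for the other composition). Because the homotopy limit is defined as the shifted mapping cone of $1-\sigma$, a pair of chain maps between the two defining two-term complexes together with a homotopy between their composites and the identity that commutes strictly with the horizontal maps extends in a standard way (cone functoriality) to a chain map and a chain homotopy on the cones. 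Applying this to both $f_*\circ g_* \sim \id$ and $g_*\circ f_* \sim \id$ yields mutually inverse chain homotopy equivalences $\chaindR^\rig(A) \simeq \mathrm{holim}\, D_\bullet$.

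The only nontrivial step is verifying that the chain homotopies produced by Proposition~\ref{pro:dagger-continuous_homotopy_HP} can be made strictly compatible with the transition maps $\sigma_m, \sigma_m^S$; this is where I expect to spend the most care. It is forced, however, by the construction in Proposition~\ref{pro:free_presentation_independent}: the homotopy $H\colon \dvr[A] \to \dvr[A,t]$ there is defined uniformly in $m$ on the underlying free algebras and only then extended by universality to each $\comJ{\dvr[A]}{J^m} \to \comJ{\dvr[A]}{J^m} \hotimes \dvr[t]^\updagger$, and the contracting homotopy of Lemma~\ref{lem:HP_dagger_1-variable} used in Proposition~\ref{pro:dagger-continuous_homotopy_HP} is a single bounded operator on $\dvf \otimes \dvr[x]^\updagger$ independent of $m$. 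Consequently the induced chain homotopies on the de Rham complexes are automatically compatible with the structure maps, which is exactly what is needed to assemble the equivalence on homotopy limits. Naturality in $S$ follows from the construction, since the only choice involved is the set-theoretic section $g\colon A \to \dvr[S]$ of $p^S$, and different choices yield dagger-continuous homotopic homomorphisms by the same argument.
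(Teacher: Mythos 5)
Your proposal is correct and follows the same route the paper takes: the paper's proof is essentially the one sentence that the levelwise chain homotopy equivalences from Proposition~\ref{pro:free_presentation_independent} and Proposition~\ref{pro:dagger-continuous_homotopy_HP} are compatible with the transition maps $\sigma_m$ and hence pass to the homotopy limits via cone functoriality, which is exactly what you argue in more detail. (Two minor slips worth fixing: your $h_m$ should be a degree $-1$ map $C_m \to C_m$, not into $C_{m+1}$, and the compatibility should read $\sigma_m \circ h_{m+1} = h_m \circ \sigma_m$; also note that in the paper $g \circ f = \id$ on the nose, so only the $f\circ g$ direction actually needs a nontrivial homotopy.)
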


\begin{proof}
  The
  chain homotopy equivalences between the de Rham complexes for $\comJ{\dvr [A]}{J^m}$ and $\comJ{\dvr [S]}{(J^S)^m}$ are compatible for different~$m$
  and hence induce a chain homotopy equivalence between the homotopy
  projective limits.
\end{proof}


\section{Rigid cohomology}
\label{sec:compare_rigid}
Let~\(A\) be a finitely generated, commutative \(\resf\)\nb-algebra.
We are going to identify \(\homdR^\rig(A)\) with Berthelot's
rigid cohomology of~\(A\), as our notation already suggests.
Berthelot defines rigid cohomology more generally for separated
\(\resf\)\nb-schemes of finite type in~\cite{Berthelot-finitude} using
the theory of rigid analytic spaces. His construction can be simplified
a little bit, if one uses Gro\ss{}e-Kl\"onne's theory of dagger spaces \cite{gkdagger}
instead.
In the following, we describe this simplified construction very roughly.
We refer to \cite{CCMT}*{\S6} for further details.


For each $n\in\N$, we consider the $\dvf$-algebra
\[
W_{n} \defeq \dvr[x_{1}, \ldots, x_{n}]^{\updagger} \otimes_{\dvr} \dvf.
\]
Using the description of the dagger completion in Proposition \ref{pdag}, this is a bornological algebra.  A \emph{dagger} $\dvf$\nb-algebra is a
quotient of some~$W_{n}$ by an ideal.


Given a dagger $\dvf$\nb-algebra $L$, we denote the set of its maximal
ideals by~$\Sp(L)$.  One declares certain subsets of $\Sp(L)$ to be admissible open,
 and one defines certain coverings of admissible open subsets by
admissible open subsets to be admissible. The admissible open subsets
with the admissible coverings then form a Grothendieck site. One also
constructs a structure sheaf $\cO_{\Sp(L)}$ on this site whose global sections
are given by $L$. An affinoid dagger space is by definition a pair of the form $(\Sp(L), \cO_{\Sp(L)})$.
By glueing affinoid dagger spaces, one obtains the general notion of a
dagger space. Particular examples of not necessarily affinoid dagger
spaces are given by admissible open subsets of an affinoid dagger space
with the induced structure sheaf.

For any dagger space $(X, \cO_{X})$ there is a notion of differential forms. There is a coherent sheaf
of $\cO_{X}$-modules $\Omega^{\dag,1}_{X}$ (where we suppress the
base field $\dvf$ from the notation) together with a $\dvf$\nb-linear derivation
$d\colon \cO_{X} \to \Omega^{\dag,1}_{X}$. The usual construction
yields a de Rham complex $\Omega^{\dag}_{X}$ of coherent sheaves on $X$, and the de
Rham cohomology of $X$ is defined as the hypercohomology $\Hy^*(X,\Omega^{\updagger}_{X})$.

\begin{remark}
    If~$R$ is a finitely generated
    $\dvr$\nb-algebra, then~$\ul{R^\updagger}$ is a dagger
    $\dvf$\nb-algebra. The complex of global sections of the de Rham complex
    $\Omega^{\dag}_{\Sp(\ul{R^\updagger} ) }$ of the affinoid dagger
    space $\Sp(\ul{R^\updagger})$ is isomorphic to the de Rham complex of $\ul{R^\updagger}$
    introduced in Definition~\ref{def:dR}.
    By \cite{gkdagger}*{Proposition~3.1} the higher cohomology of coherent sheaves on
    affinoid dagger spaces vanishes. This implies that the de Rham cohomology of the
    dagger space $\Sp(\ul{R^\updagger})$ is naturally isomorphic to the cohomology
    of the de Rham complex of $\ul{R^\updagger}$ from Definition~\ref{def:dR}.
\end{remark}

Let again~$R$ be a finitely generated $\dvr$\nb-algebra. As mentioned before,
$\ul{R^\updagger}$ is a dagger $\dvf$\nb-algebra, and we can consider the dagger space $\Sp(\ul{R^\updagger})$.
On the other hand, we also have the reduction $R/\dvgen R$ and its prime spectrum $\Spec(R/\dvgen R)$.
There exists a so called specialisation map
\[
\spec\colon \Sp(\ul{R^\updagger}) \to \Spec(R / \dvgen R).
\]
It turns out that, for any closed subset $Z \subset \Spec(R/\dvgen R)$,
the preimage
\[
]Z[ \defeq \spec^{-1}(Z) \subseteq \Sp(\ul{R^{\dag}})
\]
is an admissible open subset, hence a dagger space. It is called the tube of~$Z$ in $\Sp(\ul{R^{\dag}})$.
It is described more explicitly in \cite{CCMT}*{Rem.~6.2}.

We now consider a finitely generated $\resf$\nb-algebra $A$. The rigid cohomology of $A$ is constructed as follows.  We choose a smooth $\dvr$\nb-algebra $R$ together with a surjection $p\colon R \to A$.
Since $p$ factors over $R/\dvgen R$, it realizes $\Spec(A)$ as a closed subset of $\Spec(R /\dvgen R)$.
We thus have its tube $]\Spec(A)[$ in $\Sp(\ul{R^{\dag}})$.

\begin{definition}
  \label{def:rigid-cohomology}
  The \emph{rigid cohomology} of~$A$ with coefficients in~$\dvf$ is
  the de Rham cohomology of the tube~$]\Spec(A)[$, i.e.,
  \[
  H_\rig^*(A,\dvf) \defeq \Hy^*(]\Spec(A)[,\Omega^{\updagger}_{]\Spec(A)[}).
  \]
\end{definition}

By \cite{gkdr}*{Proposition~3.6}, the rigid cohomology groups
  defined here are canonically
  isomorphic to the groups defined by Berthelot in  \cite{Berthelot-finitude}*{(1.3.1)}. Up to isomorphism,
  they do not depend on the choice of $p\colon R\to A$.

We need a way to compute the rigid cohomology of $A$. In general, the tube $]\Spec(A)[$ is not affinoid.
However, using the tube algebras of Definition~\ref{def:alpha-tube}, we can construct an explicit admissible
covering by affinoid subsets as follows.
Choose a surjection $p\colon R \to A$ with a smooth $\dvr$-algebra $R$ as before and denote by $J$ the kernel of $p$.
Since $R$ is Noetherian, $J^{m}$ is a finitely generated ideal for any $m \in \N$. This implies that the tube algebra $\tub{R}{J^{m}}{}$ is finitely generated as a $\dvr$-algebra.
Hence $\ul{\tub{R}{J^{m}}{}^{\updagger}}$ is a dagger $\dvf$-algebra. The obvious map $\ul{R^{\dag}} \to \ul{\tub{R}{J^{m}}{}^{\updagger}}$ induces a map of dagger spaces
$\Sp( \ul{\tub{R}{J^{m}}{}^{\updagger}} ) \to \Sp( \ul{R^{\dag}} )$. Using the explicit construction of the tube algebra, it is not too hard to show that this map realizes $\Sp( \ul{\tub{R}{J^{m}}{}^{\updagger}} )$ as an admissible open subset of $]\Spec(A)[{} \subseteq \Sp( \ul{R^{\dag}} )$ and that we get  in fact an admissible covering
\begin{equation}
	\label{eq:admissible-covering-of-tube}
]\Spec(A)[{} = \bigcup_{m\geq 1} \Sp( \ul{\tub{R}{J^{m}}{}^{\updagger}} )
\end{equation}
(see \cite{CCMT}*{Rem.~6.2, Lemma~6.6}).

Our main result is the following.

\begin{theorem}
  \label{thm:rig}
  Let~\(A\) be a finitely generated, commutative
  \(\resf\)\nb-algebra.  There are natural isomorphisms
  \(\homdR_j^\rig(A) \cong H_\rig^{j}(A,\dvf)\).
\end{theorem}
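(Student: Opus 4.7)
The plan is to fix a \emph{finite} generating set $S\subseteq A$ and apply Corollary~\ref{cor:anypres} to replace $\chaindR^\rig(A)$ by the homotopy limit of the de Rham complexes of $\comJ{\dvr[S]}{(J^S)^m}$ for $m\geq 1$. The ring $R\defeq \dvr[S]$ is a smooth, Noetherian, finitely generated $\dvr$-algebra, and the surjection $p\colon R\to A$ with kernel $J\defeq J^S$ is exactly the data used in Berthelot's construction of $H^*_\rig(A,\dvf)$ (which, up to isomorphism, is independent of the choice of lift).

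By Theorem~\ref{pro:linear_growth_on_tube}, $\comJ{R}{J^m}\cong \ul{\tub{R}{J^m}{}^\updagger}$, and since $R$ is Noetherian the tube algebra is finitely generated, so $\ul{\tub{R}{J^m}{}^\updagger}$ is a dagger $\dvf$-algebra. Combining the remark identifying the de Rham complex of a dagger algebra with the global sections of $\Omega^\updagger$ on the associated affinoid, together with the vanishing of higher coherent cohomology on affinoid dagger spaces, the $m$-th de Rham complex $\chaindR^{[m]}(A)$ (computed with $S$ in place of $A$) is naturally a model for $R\Gamma(U_m,\Omega^\updagger)$, where $U_m\defeq \Sp(\ul{\tub{R}{J^m}{}^\updagger})$. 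By \eqref{eq:admissible-covering-of-tube}, the $U_m$ form an increasing admissible affinoid covering of the tube $]\Spec(A)[$.

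The remaining step is to exhibit a natural quasi-isomorphism
\[
R\Gamma\bigl(]\Spec(A)[,\Omega^\updagger\bigr)\;\simeq\;\holim_{m} R\Gamma(U_m,\Omega^\updagger).
\]
Combined with the previous paragraph, the right-hand side is quasi-isomorphic to $\chaindR^\rig(A)$, yielding the isomorphism $H^*_\rig(A,\dvf)\cong \homdR^\rig_*(A)$. Naturality in $A$ follows because all intermediate identifications are functorial in $S$, and the Milnor short exact sequence derived from \eqref{triang} matches the standard $\varprojlim^1$-sequence on the sheaf-cohomology side.

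The main obstacle is the homotopy-limit formula above. Because the cover $\{U_m\}$ is totally ordered by inclusion, every finite intersection $U_{m_0}\cap\cdots\cap U_{m_r}$ equals $U_{\min\{m_i\}}$, which is again affinoid and hence has vanishing higher coherent cohomology. Consequently the \v{C}ech-to-derived-functor spectral sequence degenerates, and the totally ordered structure further collapses the \v{C}ech bicomplex to the mapping fibre of
\[
1-\sigma\colon \prod_{m}R\Gamma(U_m,\Omega^\updagger)\longrightarrow\prod_{m}R\Gamma(U_m,\Omega^\updagger),
\]
which by \eqref{eq:hoprojlim_definition} is precisely $\chaindR^\rig(A)$. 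The one genuinely non-formal input is that this particular cover is cofinal enough to compute sheaf cohomology on the admissible open $]\Spec(A)[$, which is a standard fact in the theory of dagger spaces (or can be imported from the rigid-analytic setting via Gro{\ss}e-Kl\"onne's comparison).
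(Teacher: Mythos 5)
Your proposal is correct and follows essentially the same route as the paper's own proof: reduce to a finite generating set via Corollary~\ref{cor:anypres}, identify $\comJ{R}{J^m}$ with $\ul{\tub{R}{J^m}{}^\updagger}$ via Theorem~\ref{pro:linear_growth_on_tube}, use vanishing of higher coherent cohomology on affinoids together with the admissible covering \eqref{eq:admissible-covering-of-tube}, and conclude by a \v{C}ech argument identifying the holim with the mapping fibre of $1-\sigma$. Your observation that the cover is totally ordered (so all finite intersections are again affinoid and the \v{C}ech bicomplex collapses) is precisely the ``standard \v{C}ech cohomological argument'' the paper invokes, and your flagged non-formal input (admissibility of the cover) is exactly what the paper delegates to \cite{CCMT}*{Rem.~6.2, Lemma~6.6}.
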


\begin{proof}
We choose a particular smooth presentation of $A$ as in~\ref{subsec:homotopy-limit}.
Let $S \subseteq A$ be a finite set generating~$A$ as a $\resf$\nb-algebra. Then $R \defeq \dvr[S]$ is a smooth $\dvr$-algebra
which admits a canonical surjection $p\colon R \to A$. Let  \(J\triqui R\) be its kernel.
Corollary~\ref{cor:anypres} shows that
\(\chaindR^\rig(A)\) is homotopy equivalent to the homotopy limit
of the de Rham complexes \((\comJ{R}{J^m} \otimes
\Omega^*_{R},d)\) for \(m\in\N\).

We claim that the same complex computes the rigid cohomology of~$A$.
To see this, we use the admissible covering \eqref{eq:admissible-covering-of-tube} of the
tube $]\Spec(A)[{} \subseteq \Sp(\ul{R^{\dag}})$. By Theorem~\ref{pro:linear_growth_on_tube}, we may rewrite this using the bornological completions $\comJ{R}{J^{m}}$ as
\begin{equation}
	\label{eq:covering-of-tube-2}
 ]\Spec(A)[{} = \bigcup_{m\geq 1} \Sp( \comJ{R}{J^{m}} ).
\end{equation}
Recall that the rigid cohomology of $A$ is the de Rham cohomology of the dagger space $]\Spec(A)[$.
Since the higher cohomology of coherent sheaves on affinoid dagger spaces vanishes \cite{gkdagger}*{Proposition~3.1},
the de Rham cohomology of $\Sp( \comJ{R}{J^{m}} )$ is computed by the complex of  sections of the de Rham complex $\Omega^{\dag}_{\Sp( \ul{R^{\dag}})}$ over the admissible open subset
$\Sp( \comJ{R}{J^{m}} )$.  This complex is precisely \((\comJ{R}{J^m} \otimes \Omega^*_{R},d)\).
Finally, since  \eqref{eq:covering-of-tube-2} is an admissible covering by an increasing sequence of admissible open subsets,
a standard \v{C}ech cohomological argument shows that the de Rham cohomology of $]\Spec(A)[$ is computed by the homotopy
limit of the complexes \((\comJ{R}{J^m} \otimes \Omega^*_{R},d)\) for $m \in \N$.  This completes the proof.
\end{proof}
\begin{bibdiv}
  \begin{biblist}
    \bibselect{BorCycRig}
  \end{biblist}
\end{bibdiv}
\end{document}